\newtheorem{theorem}{Theorem} 
\newtheorem{lemma}[theorem]{Lemma}
\newtheorem{proposition}[theorem]{Proposition}
\theoremstyle{definition}
\theoremstyle{remark}
\newtheorem{remark}{Remark} 
\numberwithin{equation}{section}
\begin{document}

\title[Common Limit of the Linear Statistics of Random Zeros]
{The Common Limit of the Linear Statistics of Zeros of Random
Polynomials and Their Derivatives}

\author{I-Shing Hu}
\address{Department of Mathematics, National Taiwan University, Taipei 10617,
Taiwan, R.O.C.}
\email{r02221015@ntu.edu.tw}

\author{Chih-Chung Chang}
\address{Department of Mathematics, National Taiwan University, Taipei 10617,
Taiwan, R.O.C.}
\email{ccchang@math.ntu.edu.tw}

\subjclass[2010] {Primary: 30C15, 60B10; secondary: 60B20, 60G57, 60F05, 60F15, 60F25}



\begin{abstract}
Let $ p_n(x) $ be a random polynomial of degree $n$ and
$\{Z^{(n)}_j\}_{j=1}^n$ and $\{X^{n, k}_j\}_{j=1}^{n-k}, k<n$,
be the zeros of $p_n$ and $p_n^{(k)}$, the $k$th derivative
of $p_n$, respectively.
We show that if the linear statistics 
$\displaystyle{ 
\frac {1}{a_n}   \left[ f\left( \frac {Z^{(n)}_1}{b_n} \right) 
+ \cdots + f \left(\frac {Z^{(n)}_n}{b_n} \right) \right]}$
associated with $\{Z^{(n)}_j\}$
has a limit as $n\to\infty$ at some mode of convergence,
the linear statistics associated with $\{X^{n, k}_j\}$
converges to the same limit at the same mode.
Similar statement also holds for the centered linear statistics 
associated with the zeros of $p_n$ and $p_n^{(k)}$,
provided the zeros $\{Z^{(n)}_j\}$ and the
sequences $\{a_n\}$ and $\{b_n\}$ of positive numbers
satisfy some mild conditions.
\end{abstract}

\maketitle

\section{Introduction} 

Fix a probability space $(\Omega, \mathcal F, \bf P)$
and let $\{p_n(z)\}_{n=1}^\infty$
be a sequence of random polynomials such that $\text{deg } p_n=n$.
We observe that the randomness of  polynomials can be introduced
in several different ways.

\begin{description}
\item [Type 1]
Given a triangular array of random zeros $\{Z^{(n)}_j\}_{j=1}^n, n=1, 2, \ldots$, let 
\begin{equation} \label{p1}
p_n(z)=(z-Z_1^{(n)})\cdots (z-Z_n^{(n)}).
\end{equation}
Here the coefficient of $z^n$ is set to be 1 for simplicity.
\item [Type 2]
Given a triangular array of random coefficients $\{a^{(n)}_j\}_{j=0}^n, n=1, 2, \ldots$, let 
\begin{equation} \label{p2}
p_n(z)=a^{(n)}_n z^n+\cdots +a^{(n)}_1 z+ a^{(n)}_0.
\end{equation}
\item [Type 3]
Given a sequence of random matrices $\{A^{(n)}\}_{n=1}^\infty$, let
\begin{equation} \label{p3}
p_n(z)=  { \rm{det} }(z I-A^{(n)}),
\end{equation}
the characteristic polynomial of $A^{(n)}$. 
Here $I$, the identity matrix, and $A^{(n)}$ are square matrices of size $n$.
\end{description}

\vskip .4cm
No matter how $p_n$ is constructed, 
we always denote the zeros of $p_n$ by $Z^{(n)}_1, \ldots , Z^{(n)}_n$.
Next, for each positive integer $k < n$,   
let $p_n^{(k)}$ be the $k$th derivative
of $p_n$, and $X^{n, k}_j, j=1, 2, \ldots , n-k$,
be the zeros of $p_n^{(k)}$.
In particular, when $k=1$, $X^{n, 1}_1, \cdots, X^{n, 1}_{n-1}$ are 
called the critical points of $p_n$.

The relation between the zeros and the critical points of
polynomials has been much studied. For example,
the Gauss-Lucas theorem asserts that all critical points
of a non-constant polynomial $f$ lie inside the closed convex hull 
formed by the zeros of $f$. It follows by induction that
the zeros of $f^{(k)}, k< \text{deg } f$, also lie inside the same closed convex hull.
More refinements of Gauss-Lucas theorem
can be found in \citep{or} and the references therein.
A recent paper \citep{a} also discussed some related results and examples.

On the other hand, R. Pemantle and I. Rivin initiated a
probabilistic study on the limit of the critical points of
random polynomials of Type 1.
Consider the following probability measures:
\begin{equation}\label{mu}
\mu_n=\frac{1}{n} \sum_{j=1}^{n} \delta_{Z^{(n)}_j}, \qquad\text{and}\qquad
\mu^{(k)}_n=\frac{1}{n-k} \sum_{j=1}^{n-k} \delta_{X^{n, k}_j}, 1\le k <n,
\end{equation}
where $\delta_z$ is the Dirac measure concentrated on $z$.
$\mu_n$ and $\mu^{(k)}_n$ are   
the empirical measures associated with the zeros $\{Z^{(n)}_j\}_{j=1}^{n}$
and $\{X^{n, k}_j\}_{j=1}^{n-k}$, respectively.
R. Pemantle and I. Rivin (\citep{pr}) showed that,
if $Z^{(n)}_j = Z_j$ and $\{Z_j\}_{j=1}^\infty$ is a sequence of
independent and identically distributed (i.i.d.) random variables 
governed by a common law $\nu$, then  
$\mu^{(1)}_n \stackrel {w}{\to} \nu$  
almost surely (a.s.) as $n\to \infty$ provided $\nu$ 
satisfies certain energy condition.
In this paper $\stackrel {w}{\to}$ means
``{\it converges weakly}'' or ``{\it converges in distribution}''.
In the same i.i.d. setting without any further assumption on 
the probability law $\nu$,
Z. Kabluchko (\citep{k}) proved in great generality that 
$\mu^{(1)}_n \stackrel {w}{\to} \nu$ in probability as $n\to\infty$.
For the case of higher order derivatives, in the i.i.d. setting, 
if the probability measure $\nu$ 
is supported  on the unit circle in $\mathbb C$, P. L. Cheung
et. al. (\citep{cnty}) showed that $\mu^{(k)}_n \stackrel{w}{\to} \nu$ a.s. 
as $n\to\infty$. Similar results for the zeros of the generalized derivatives 
of polynomials are also obtained in \citep{cnty}.

To state a result of Type 2
polynomials, recall that a polynomial is called a {\it Kac polynomial} (\citep{kz}) 
if it has the form $\displaystyle{\sum_{j=0}^n \xi_j z^j}$,
where $\{\xi_j\}_{j=0}^\infty$ is a sequence of non-degenerate 
i.i.d. random variables.
Furthermore, given a sequence of deterministic complex numbers
$\{w_j\}_{j=0}^\infty$, a polynomial of the form
$\displaystyle{\sum_{j=0}^n \xi_j w_j z^j}$ is called
a {\it Littlewood-Offord random polynomial}.
Clearly, any $k$th derivative of  
a Kac polynomial is a Littlewood-Offord random polynomial.
Z. Kabluchko and D. Zaporozhets (\citep{kz} and 
Theorem 14 of \citep{or}) 
proved that both sequences of the empirical measures
$\{\mu_n\}$ and $\{\mu^{(k)}_n\}$ 
converge weakly
to the uniform distribution on the unit circle of $\mathbb C$
centered at the origin in probability as $n\to\infty$,
provided that ${\bf E} [ \log ( 1 + | \xi_0 | ) ] <\infty$.
See \citep{kz} for the explicit statements
of the theorems and examples.

Under various settings and assumptions, the eigenvalue statistics of 
random matrices/sample covariance matrices exhibits
various interesting and important limit behavious, for example the circle law, semicircle law,
Mar\v{c}enko-Pastur law, central limit theorem, large deviations, and so on.
See \citep{agz} and \citep{bs} for a systematic introduction.
To name a result related to our work, 
consider a sequence of random Hermitian matrices 
$\{A^{(n)}\}_{n=1}^\infty$ with
$p_n$ its characteristic polynomial.
S. O'Rourke (\citep{or}) 
showed that the L\'{e}vy distance between
$\mu_n$ and $\mu^{(1)}_n$ tends to zero almost surely as $n\to\infty$.
This observation 
implies that a.s. $\{\mu^{(1)}_n\}$ converges weakly to the same semicircle law
as $\{\mu_n\}$ does. 
Such phenomenon that $\{\mu^{(1)}_n\}$ 
converges weakly to the same law as that of $\{\mu_n\}$ 
(at some mode of convergence) was further
demonstrated for several compact classical matrix groups 
by S. O'Rourke in the same paper.  
Check \citep{or} (theorem 6, corollary 7, theorem 9,
and remark 10) for explicit statements and references therein. 
Here we merely point out that all the limit laws of the eigenvalue statistics of the 
matrix models considered are compactly supported in $\mathbb C$. 

In view of the fact that all the results concerning the relation between
$\{\mu_n\}$ and $\{\mu^{(k)}_n\}$ reviewed above 
are of the type of law of large numbers, it is natural to ask how about other 
types of limit theorem? The goal of this paper is to show that
if certain limit property, for example law of large numbers, central limit
theorem, law of iterated logarithm, and so on,
holds for the linear statistics of $\{Z^{(n)}_j\}_{j=1}^n$
(see below for the precise statement), then the same limit property
passes to that of $\{X^{n, k}_j\}^{n-k}_{j=1}$ for any $k$,
provided the zeros $\{Z^{(n)}_j\}_{j=1}^n$ satisfy some mild
conditions which we now state.

Denote by $\Im z$ the imaginary part of a complex number $z$.

{\bf A1}.
There exists a non-negative constant $C_0 \ge 0$ independent of $n$ such that 
\begin{equation} \label{A1}
\sup_{n\in \mathbb N} \max_{1 \le j \le n} | \Im Z^{(n)}_j | \le C_0 \quad \text{ a.s.}
\end{equation}
That is, the imaginary parts of  $\{Z^{(n)}_j\}$ are
uniformly bounded with probability one. Since every zero of $p_n^{(k)}$ lies inside the closed convex hull 
of the zeros of $p_n^{(k-1)}$ by Gauss-Lucas theorem, we know by induction
that 
\begin{equation} \label{A1.1}
\sup_{n\in \mathbb N} \max_{1\le k < n} \max_{1 \le j \le n-k} 
| \Im X^{n,k}_j | \le C_0 \quad \text{ a.s.}
\end{equation}
When the zeros are real numbers, we put $C_0=0$.

Recall that $\mu_n \stackrel {\text{w} }{\to} \nu$ is equivalent
to $\displaystyle{\lim_{n\to\infty} \int f \,d\mu_n=
\lim_{n\to\infty} \frac 1n\sum_{j=1}^n f (Z^{(n)}_j ) = \int f\,d\nu}$
for each bounded continuous function $f$.
It is therefore quite common to study such sums (namely, linear statistics)
for various categories of test functions.
\citep{lp09}, \citep{lp11}, \citep{bwz}, \citep{jss}, and \citep{lrs}
are a few examples. In particular, the issue of 
regularity conditions for the test functions is discussed in \citep{ko} and \citep{sw}. 
In this paper we restrict ourselves to regular test functions
$f$ such that $f$ and $|f'|$ are bounded, $\hat f$ exists and $f(x) = \int \hat{f}(t) e^{ i t x }\,dt$, and
\begin{equation}\label{f-bdd}
\int_{\mathbb R} | \hat{f}(t) | e^{3 C_0 |t|}\,dt < \infty,
\end{equation}
where $i=\sqrt{-1}, \hat{f}(t)=\frac{1}{2\pi}\int f(u)e^{-i t u}\,du$ is the Fourier transform of $f$, and 
$C_0$ is the same absolute constant appeared in (\ref{A1}).

To adapt to the different scalings in various limit theorems, 
we consider different sequences of positive numbers 
for different linear statistics to be defined later.
Below we list three groups of assumptions to be used in the three
main theorems of this paper, respectively.

{\bf A2}.
There exists a sequence of positive numbers $\{a_n\}_{n=1}^\infty$ 
such that
\begin{eqnarray} 
&& \lim_{n\to\infty} a_n = \infty, \label{an1}\\ 
&& \lim_{n\to\infty} \frac{ n\,| a_{n-k} - a_{n-k-1} | }
{ a_{n-k} a_{n-k-1} }= 0, \; \text{for each fixed} \; k < n-1, \label{an2}\\
&& \lim_{n\to\infty} \sup_{1 \le j \le n} \left\{\frac {|Z^{(n)}_j |}{a_{n-k}}\right\}
=0 \;\;\text{ a.s.} \;\text{for each fixed} \; k<n.  \label{an3} 
\end{eqnarray}

{\bf A3}.
There exists a sequence of positive numbers 
$\{b_n\}_{n=1}^\infty$ such that
\begin{eqnarray} 
&& \lim_{n\to\infty} b_n = \infty, \label{bn1}\\
&& \lim_{n\to\infty} \frac{ | b_{n-k} - b_{n-k-1} |}
{ b_{n-k} b_{n-k-1} }\left[ \sum_{j=1}^n |Z^{(n)}_j| \right]
=0 \; \;\text{ a.s.} \; \text{for each fixed} \; k < n-1, \label{bn2}\\
&& \lim_{n\to\infty} \sup_{1 \le j \le n} \left\{\frac {|Z^{(n)}_j |}{b_{n-k}}\right\}
=0 \;\;\text{ a.s.} \;\text{for each fixed} \; k<n.  \label{bn3} 
\end{eqnarray}

{\bf A4}.
There exist two sequences of positive numbers 
$\{a_n\}_{n=1}^\infty$ and $\{b_n\}_{n=1}^\infty$ such that
\begin{eqnarray} 
&& \lim_{n\to\infty} a_n = \lim_{n\to\infty} b_n = \infty, \label{anbn1}\\
&& \lim_{n\to\infty} \frac{ | b_{n-k} - b_{n-k-1} |}
{ a_{n-k-1} b_{n-k} b_{n-k-1} }\left[ \sum_{j=1}^n |Z^{(n)}_j| \right]
=0 \; \;\text{ a.s.} \; \text{for each fixed} \; k < n-1, \label{anbn2}\\
&& \lim_{n\to\infty} \sup_{1 \le j \le n} \left\{\frac {|Z^{(n)}_j |}{a_{n-k} b_{n-k}}\right\}
=0 \;\;\text{ a.s.} \;\text{for each fixed} \; k<n.  \label{anbn3} 
\end{eqnarray}

Now we are ready to introduce the key objects that we want to study.
Consider the following three linear statistics associated with $\{Z^{(n)}_j\}$
\begin{eqnarray*}
L_{n, 1} (f) &=& \frac {1}{a_n}   \left[ f\left( Z^{(n)}_1 \right) 
+ \cdots + f \left( Z^{(n)}_n \right) \right], \\
L_{n, 2} (f) &=& f\left( \frac {Z^{(n)}_1}{b_n} \right) 
+ \cdots + f \left(\frac {Z^{(n)}_n}{b_n} \right), \\
L_{n, 3} (f) &=& \frac {1}{a_n}   \left[ f\left( \frac {Z^{(n)}_1}{b_n} \right) 
+ \cdots + f \left(\frac {Z^{(n)}_n}{b_n} \right) \right],
\end{eqnarray*}
and the three linear statistics associated with $\{X^{n, k}_j\}$
\begin{eqnarray*}
L^{(k)}_{n, 1} (f) &=& \frac {1}{a_{n-k}}   \left[ f\left( X^{n, k}_1 \right) 
+ \cdots + f \left( X^{n, k}_{n-k} \right) \right], \\
L^{(k)}_{n, 2} (f) &=& f\left( \frac {X^{n, k}_1}{b_{n-k}} \right) 
+ \cdots + f \left(\frac {X^{n, k}_{n-k}}{b_{n-k}} \right), \\
L^{(k)}_{n, 3} (f) &=& \frac {1}{a_{n-k}}   \left[ f\left( \frac {X^{n, k}_1}{b_{n-k}} \right) 
+ \cdots + f \left(\frac {X^{n, k}_{n-k}}{b_{n-k}} \right) \right].
\end{eqnarray*}
It is also necessary to consider the centered (mean zero) linear statistics
\begin{equation*}
{\bar L_{n, \ell}} (f)  =  L_{n, \ell} (f) - E [ L_{n, \ell} (f) ], 
\quad {\bar L^{(k)}_{n, \ell}} (f)  =  L^{(k)}_{n, \ell} (f) - E [ L^{(k)}_{n, \ell} (f) ],
\; \ell=1, 2, 3.
\end{equation*}

For example, $L_{n, 1} (f)$ with $a_n=n$ and 
${\bar L}_{n, 1} (f)$ with $a_n=\sqrt{n}$ 
play the typical roles in law of large numbers and 
central limit theorem, respectively.
In the random matrix models, one 
studies $L_{n, 3}$ with  $a_n=n, b_n=\sqrt{n}$ for law of large numbers results.
In these cases {\bf A2} and {\bf A4} are valid obviously.

\begin{theorem} \label{l1}
Let the random zeros $\{Z^{(n)}_j\}_{j=1}^n$ and the 
sequence $\{a_n\}$ 
of positive numbers be given as above such that they satisfy
the assumptions {\rm \bf A1} and {\rm \bf A2}. 
If the linear statistics 
$\displaystyle{L_{n, 1} (f)}$
has a limit as $n\to\infty$ at some mode of convergence,
then, for each $k<n$,
the linear statistics $L_{n, 1}^{(k)} (f)$ 
converges to the same limit at the same mode of convergence.
Similar statement holds for 
${\bar L}_{n, 1} (f)$ and ${\bar L}^{(k)}_{n, 1} (f)$.
\end{theorem}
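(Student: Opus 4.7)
The plan is to reduce the general statement to the case $k=1$ by induction on $k$, and then to establish a key contour-integral estimate. For the induction, apply the $k=1$ result successively to $p_n,p_n',\ldots,p_n^{(k-1)}$, noting that both hypotheses transfer to the zeros of each derivative: \textbf{A1} passes to $\{X^{n,\ell}_j\}$ by Gauss--Lucas (this is already (\ref{A1.1})), and the third requirement of \textbf{A2} passes because Gauss--Lucas forces $\max_j|X^{n,\ell}_j|\le\max_j|Z^{(n)}_j|$ (the maximum modulus on a convex hull is attained at an extreme point), so the modulus bound descends to each derivative.

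For the $k=1$ step, writing $Z_j=Z^{(n)}_j$ and $X_j=X^{n,1}_j$, I would decompose
\[
L^{(1)}_{n,1}(f)-L_{n,1}(f)=\tfrac{1}{a_{n-1}}\Bigl[\sum_{j=1}^{n-1}f(X_j)-\sum_{j=1}^{n}f(Z_j)\Bigr]+\Bigl(\tfrac{1}{a_{n-1}}-\tfrac{1}{a_n}\Bigr)\sum_{j=1}^{n}f(Z_j).
\]
The scaling correction is bounded pathwise by $n\|f\|_\infty|a_n-a_{n-1}|/(a_na_{n-1})$, which tends to $0$ almost surely by (\ref{an2}) applied at $k=0$. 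Thus the remaining task is to show the bracketed quantity, divided by $a_{n-1}$, tends to $0$ in the prescribed mode.

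I would attack this via the Fourier representation $f(z)=\int\hat f(t)e^{itz}\,dt$ — which, thanks to $\int|\hat f(t)|e^{3C_0|t|}\,dt<\infty$, extends $f$ holomorphically and boundedly to the strip $|\Im z|<3C_0$ — combined with the residue identity
\[
\sum_{j=1}^{n-1}f(X_j)-\sum_{j=1}^{n}f(Z_j)=\frac{1}{2\pi i}\oint_\Gamma f(z)\Bigl[\frac{p_n''(z)}{p_n'(z)}-\frac{p_n'(z)}{p_n(z)}\Bigr]\,dz,
\]
valid for any $\Gamma$ enclosing the zeros and the critical points. Since those points are confined to $|\Im z|\le C_0$, one can deform $\Gamma$ to a large rectangle with horizontal edges at $\Im z=\pm 2C_0$. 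Writing $R:=p_n'/p_n$, the bracketed expression is $R'/R=-1/z+O(|z|^{-2})$ at infinity, so the vertical edges' contributions vanish in the limit of infinite length. The leading $-1/z$ piece contributes a residue at $0$, namely $-f(0)$, which is bounded; what remains is an integral of $f(z)\bigl(R'(z)/R(z)+1/z\bigr)$ along the two horizontal lines, whose integrand decays like $|z|^{-2}$ at infinity.

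The main obstacle is to convert this into a pathwise bound of order $\max_j|Z^{(n)}_j|$ (with an implicit constant depending only on $f$ and $C_0$). The natural route is the pairing identity
\[
\frac{R'(z)}{R(z)}+\frac{1}{z}=\sum_{j=1}^{n-1}\frac{X_j-Z_{\sigma(j)}}{(z-X_j)(z-Z_{\sigma(j)})}-\frac{Z_{i_0}}{z(z-Z_{i_0})},
\]
valid for any injection $\sigma:\{1,\ldots,n-1\}\to\{1,\ldots,n\}$ with leftover index $i_0$, together with a Gauss--Lucas-informed choice of $\sigma$ exploiting that each $X_j$ lies in the convex hull of the $Z_i$. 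Once $\bigl|\sum f(X_j)-\sum f(Z_j)\bigr|=O(\max_j|Z^{(n)}_j|)$ is established, dividing by $a_{n-1}$ and invoking (\ref{an3}) yields the desired convergence in the original mode. For the centered versions, subtracting expectations absorbs the deterministic scaling correction and the pathwise contour estimate carries over unchanged, so $\bar L^{(1)}_{n,1}(f)-\bar L_{n,1}(f)\to 0$ in the same mode; iterating through the induction completes the proof.
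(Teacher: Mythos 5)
Your overall architecture (peel off the scaling correction, reduce to $k=1$, induct using Gauss--Lucas to transfer \textbf{A1} and (\ref{an3})) matches the paper's, and the contour-integral route is a genuinely different idea from the paper's mechanism, which represents the critical points as eigenvalues of the Cheung--Ng companion matrix and compares ${\rm Tr}\, e^{itM_{n-1}}$ with ${\rm Tr}\, e^{itD_{n-1}}$ via Duhamel's formula. But the step you yourself flag as ``the main obstacle'' is not a technicality: the bound $\bigl|\sum_j f(X_j)-\sum_j f(Z_j)\bigr|=O(\max_j|Z^{(n)}_j|)$ obtained by choosing an injection $\sigma$ and estimating $\sum_j|X_j-Z_{\sigma(j)}|$ is false. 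Take $p_n(z)=z^n-1$: the zeros are the $n$th roots of unity (so \textbf{A1} holds with $C_0=1$ and (\ref{an3}) holds for any $a_n\to\infty$), while all $n-1$ critical points sit at the origin; hence $\sum_j|X_j-Z_{\sigma(j)}|=n-1$ for \emph{every} injection $\sigma$, against $\max_j|Z^{(n)}_j|=1$. The true difference of the two sums is $O(1)$ in this example, but only because of cancellation among the $n-1$ pair terms, which your term-by-term absolute-value estimate discards; after dividing by $a_{n-1}=\sqrt{n}$ (the CLT scaling the theorem is meant to cover) your bound gives $\sqrt{n}\to\infty$. The paper's identity (\ref{m&d}) is precisely the device that retains this cancellation: the trace difference comes out as $\frac{ic_{n-1}}{n}{\rm Tr}(\tilde J_{n-1}e^{itD_{n-1}})$ with an explicit factor $1/n$, leading to the bound (\ref{wn2}) in terms of $|Z^{(n)}_1|/a_{n-1}$ and the \emph{average} $\frac 1n\sum_j|Z^{(n)}_j|$ over $a_{n-1}$, which is exactly what (\ref{an3}) controls.

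A second, independent problem: the contour argument needs $f$ to extend holomorphically to a strip of positive width, which the hypothesis $\int|\hat f(t)|e^{3C_0|t|}\,dt<\infty$ supplies only when $C_0>0$. When the zeros are real the paper sets $C_0=0$, the condition reduces to $\int|\hat f|\,dt<\infty$, and $f$ need not be analytic anywhere; your rectangle with horizontal edges at $\Im z=\pm 2C_0$ degenerates to the real axis. Since real zeros (i.i.d.\ real roots, Hermitian matrices) are the principal examples, the method would have to be replaced, not merely patched, in that case, whereas the paper's Fourier/trace argument works uniformly in $C_0\ge 0$ because it never leaves the real $t$-axis.
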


\begin{theorem} \label{l2}
Let the random zeros $\{Z^{(n)}_j\}_{j=1}^n$ and the 
sequence $\{b_n\}$ 
of positive numbers be given as above such that they satisfy
the assumptions {\rm \bf A1} and {\rm \bf A3}. 
If the linear statistics 
${\bar L}_{n, 2} (f)$
converges weakly to some probability law $\nu=\nu_f$ as $n\to\infty$,
then ${\bar L}_{n, 2}^{(k)} (f) \stackrel {w}{\to} \nu_f$
for each fixed $k<n$.
\end{theorem}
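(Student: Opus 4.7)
The plan is to reduce weak convergence of $\bar L^{(k)}_{n,2}(f)$ to that of $\bar L_{n,2}(f)$ via Slutsky's theorem. It suffices to show that $\bar L^{(k)}_{n,2}(f) - \bar L_{n,2}(f) \to 0$ in probability, and for this I will show that $L^{(k)}_{n,2}(f) - L_{n,2}(f)$ converges almost surely to a deterministic constant while the corresponding expectations converge to the same constant. The engine is the Fourier representation $f(x) = \int \hat f(t) e^{itx}\,dt$, which gives
\[
L^{(k)}_{n,2}(f) - L_{n,2}(f) = \int \hat f(t)\,\bigl[S^{(k)}_n(t) - S_n(t)\bigr]\,dt,
\]
with $S_n(t) = \sum_{j=1}^n e^{it Z^{(n)}_j/b_n}$ and $S^{(k)}_n(t) = \sum_{j=1}^{n-k} e^{it X^{n,k}_j/b_{n-k}}$. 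The growth assumption (\ref{f-bdd}) together with (\ref{A1.1}) will later justify dominated convergence in the $t$-integral. I proceed by induction on $k$: for $k > 1$, regard $p_n^{(k-1)}$ as the base polynomial (with zeros $X^{n,k-1}_j$); the hypotheses {\bf A1} and {\bf A3} transfer to this configuration via Gauss-Lucas, and the $k=1$ case is then applied.

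For $k = 1$, split $S^{(1)}_n(t) - S_n(t) = R_n(t) + D'_n(t)$ with
\[
R_n(t) = \sum_{j=1}^{n-1}\bigl[e^{it X^{n,1}_j/b_{n-1}} - e^{it X^{n,1}_j/b_n}\bigr],\quad D'_n(t) = \sum_{j=1}^{n-1} e^{it X^{n,1}_j/b_n} - \sum_{j=1}^n e^{it Z^{(n)}_j/b_n}.
\]
The rescaling piece $R_n(t)$ is estimated via the elementary bound $|e^a - e^b| \le |a-b|\, e^{\max(\Re a, \Re b)}$; combined with (\ref{A1.1}) this yields
\[
|R_n(t)| \le |t|\, e^{|t| C_0}\,\frac{|b_n - b_{n-1}|}{b_n b_{n-1}}\,\sum_{j=1}^{n-1}|X^{n,1}_j|,
\]
which tends to zero by (\ref{bn2}) once $\sum_j|X^{n,1}_j|$ is majorized by a constant multiple of $\sum_j|Z^{(n)}_j|$ (using Gauss-Lucas together with the Vieta identity $\sum_j X^{n,1}_j = \tfrac{n-1}{n}\sum_j Z^{(n)}_j$).

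For $D'_n(t)$, expand the exponential and introduce the power sums $P_m = \sum_j(Z^{(n)}_j)^m$, $Q_m = \sum_j(X^{n,1}_j)^m$, so that $D'_n(t) = \sum_{m \ge 0}\frac{(it)^m}{m!\,b_n^m}(Q_m - P_m)$. The generating-function identity
\[
\sum_{m \ge 0} \frac{Q_m - P_m}{w^{m+1}} = \frac{\psi'(w)}{\psi(w)},\qquad \psi(w) = \frac{p_n'(w)}{p_n(w)},
\]
valid for $|w|$ larger than all $|Z^{(n)}_j|$, together with the Newton/Vieta relations $e_m(X^{n,1}) = \tfrac{n-m}{n}\,e_m(Z^{(n)})$, extracts the $m = 0$ constant $Q_0 - P_0 = -1$ (reflecting the missing zero of $p_n'$) and gives bounds of the form $|Q_m - P_m| \le C\, m\,(\max_i|Z^{(n)}_i|)^m$ for $m \ge 1$. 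Summing the resulting Taylor series yields $D'_n(t) + 1 \to 0$ a.s.\ pointwise in $t$, dominated by $|t|\, e^{|t|C_0}$ times a factor that vanishes by (\ref{bn3}).

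Assembling, $S^{(1)}_n(t) - S_n(t) \to -1$ a.s.\ for every real $t$, uniformly dominated by $C|t|\,e^{|t|C_0}$, which is integrable against $|\hat f(t)|$ by (\ref{f-bdd}). Dominated convergence then gives $L^{(1)}_{n,2}(f) - L_{n,2}(f) \to -f(0)$ a.s., and the same limit for the expectations; subtracting yields $\bar L^{(1)}_{n,2}(f) - \bar L_{n,2}(f) \to 0$ a.s., and Slutsky's theorem completes the argument. The hard part will be the same-scale comparison $D'_n(t) + 1 \to 0$: turning the generating-function identity into Newton-type estimates on $Q_m - P_m$ sharp enough in $m$ to control the full Taylor expansion in $t$, and in parallel securing the auxiliary inequality $\sum_j|X^{n,1}_j| \le C\sum_j|Z^{(n)}_j|$ needed to exploit (\ref{bn2}) on $R_n(t)$.
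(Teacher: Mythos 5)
Your strategy agrees with the paper's at the structural level: split $L^{(1)}_{n,2}(f)-L_{n,2}(f)$ into a rescaling piece (a mean-value/Lipschitz bound plus (\ref{bn2})) and a same-scale piece, show the same-scale piece converges a.s.\ to the constant $-f(0)$ produced by the one missing zero of $p_n'$, note that this constant cancels upon centering, and finish with Slutsky; the extension to $k>1$ by treating $p_n^{(k-1)}$ as the base polynomial is also what the paper does. Where you genuinely diverge is the engine for the same-scale comparison. The paper runs it through the Cheung--Ng companion matrix: Lemma 3 gives the exact identity ${\rm Tr}(e^{isM_{n-1}})-{\rm Tr}(e^{isD_{n-1}})=\frac{ic_{n-1}(s)}{n}{\rm Tr}(\tilde J_{n-1}e^{isD_{n-1}})$ at $s=t/b_{n-1}$, and the ready-made estimates (\ref{c}) and (\ref{abst/n}) bound the trace difference by $\frac{|t|}{b_{n-1}}\,e^{3C_0|t|}\bigl(|Z^{(n)}_1|+\frac1n\sum_j|Z^{(n)}_j|\bigr)$, which vanishes by (\ref{bn3}); the term $-f(Z^{(n)}_1/b_{n-1})\to-f(0)$ appears because $D_{n-1}$ omits $z_1$. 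You instead compare power sums via the logarithmic derivative of $\psi=p_n'/p_n$. The identity is correct and the route is viable, but it trades a one-line matrix estimate for a genuinely harder estimation problem, which is where your gaps sit.

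Concretely: (1) the bound $|Q_m-P_m|\le C\,m\,(\max_i|Z^{(n)}_i|)^m$, which you yourself flag as the hard part, is not what your generating function delivers; writing $\frac{p_n'}{np_n}=\frac1w(1+g)$ with $g=\sum_{m\ge1}P_m/(nw^m)$ and applying Cauchy estimates to $g'/(1+g)$ on $|w|=3\max_i|Z^{(n)}_i|$ yields only a geometric bound $|Q_m-P_m|\le C^{m+1}(\max_i|Z^{(n)}_i|)^m$. That is still good enough --- summing against $|t|^m/(m!\,b_n^m)$ gives $O\bigl(e^{C|t|\max_i|Z^{(n)}_i|/b_n}-1\bigr)\to0$ by (\ref{bn3}) --- but the estimate as you state it is unproved and the argument must be rewritten around the weaker one. (2) Your justification of $\sum_j|X^{n,1}_j|\le C\sum_j|Z^{(n)}_j|$ by ``Gauss--Lucas plus Vieta'' is not a proof: Gauss--Lucas gives only $|X^{n,1}_j|\le\max_i|Z^{(n)}_i|$, and Vieta controls $\sum_jX^{n,1}_j$, not $\sum_j|X^{n,1}_j|$. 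The inequality is true with $C=1$, but the correct tool is the de Bruijn--Springer / Erd\H{o}s--Niven inequality that the paper cites for exactly this purpose. (3) You assert that $E[L^{(1)}_{n,2}-L_{n,2}]\to-f(0)$ follows from the almost sure convergence; it does not without uniform integrability, and {\bf A1}/{\bf A3} are purely almost-sure hypotheses. The paper is equally silent on this point, but it is an essential step in your Slutsky reduction and should be addressed rather than asserted.
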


\begin{theorem} \label{l3}
Let the random zeros $\{Z^{(n)}_j\}_{j=1}^n$ and the 
sequences $\{a_n\}$ and $\{b_n\}$
of positive numbers be given as above such that they satisfy
the assumptions {\bf A1} and {\bf A4}. 
If the linear statistics 
$\displaystyle{L_{n, 3} (f)}$
has a limit as $n\to\infty$ at some mode of convergence,
then, for each $k<n$,
the linear statistics $L_{n, 3}^{(k)} (f)$ 
converges to the same limit at the same mode of convergence.
Similar statement holds for 
${\bar L}_{n, 3} (f)$ and ${\bar L}^{(k)}_{n, 3} (f)$.
\end{theorem}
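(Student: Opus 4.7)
The plan is to reduce both claims of the theorem to the single pointwise statement
\[
L_{n,3}^{(k)}(f)-L_{n,3}(f)\longrightarrow 0\quad\text{almost surely, for every fixed }k<n.
\]
Granted this, the uncentered claim follows because adding an a.s.\ null sequence preserves any mode of convergence; the centered claim follows provided one checks that $E[L_{n,3}^{(k)}(f)-L_{n,3}(f)]\to 0$, which dominated convergence against the deterministic majorant constructed along the way will supply. I would prove the displayed limit by induction on $k$, so the real content is the one-step telescoping
\[
L_{n,3}^{(k)}(f)-L_{n,3}^{(k-1)}(f)\longrightarrow 0\quad\text{a.s.}
\]

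Using Fourier inversion of $f$, set $\Phi_{n,j}(u):=\sum_{i=1}^{n-j}e^{iuX^{n,j}_i}$ and write
\[
L_{n,3}^{(j)}(f)=\frac{1}{a_{n-j}}\int_{\mathbb R}\hat f(t)\,\Phi_{n,j}(t/b_{n-j})\,dt.
\]
I would split the one-step difference by inserting the hybrid $\tfrac{1}{a_{n-k+1}}\int\hat f(t)\,\Phi_{n,k}(t/b_{n-k+1})\,dt$ into three pieces: (i) the change in outer prefactor $1/a_{n-k}\to 1/a_{n-k+1}$, applied to $\Phi_{n,k}(t/b_{n-k})$; (ii) the change in inner scaling $t/b_{n-k}\to t/b_{n-k+1}$ inside $\Phi_{n,k}$; and (iii) the genuine zero-set change $\Phi_{n,k}\to\Phi_{n,k-1}$ at the common scale $t/b_{n-k+1}$. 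Pieces (i)--(ii) are routine: A1 yields the pointwise bound $|\Phi_{n,j}(u)|\le n\,e^{C_0|u|}$, Gauss--Lucas yields $|X^{n,k}_i|\le\max_\ell|Z^{(n)}_\ell|$, the mean value theorem handles $|e^{iuw}-e^{ivw}|$ on the complex strip, and the weighted integrability (\ref{f-bdd}) absorbs the factor $e^{C_0|t|}$ uniformly in $n$. The resulting scalar prefactors vanish by (\ref{anbn1})--(\ref{anbn3}); in particular (\ref{anbn2}) was calibrated exactly to kill the sum-of-moduli factor that appears in (ii).

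The heart of the argument is piece (iii). Here I would use the residue representation
\[
\Phi_{n,j}(u)=\frac{1}{2\pi i}\oint_{\Gamma_h}e^{iuz}\,\frac{p_n^{(j+1)}(z)}{p_n^{(j)}(z)}\,dz,
\]
where $\Gamma_h$ is a rectangular contour with horizontal sides at $\Im z=\pm h$ for some $h$ slightly exceeding $C_0$; A1 combined with Gauss--Lucas guarantees that all zeros of $p_n^{(j)}$ lie strictly inside, and the vertical sides contribute nothing as one lets them escape to infinity, since the logarithmic derivatives decay like $n/|z|$. The extra exponential room in (\ref{f-bdd}) is precisely what licenses the estimate $|e^{iuz}|=e^{\mp uh}$ on the two horizontal segments after the substitution $u=t/b_{n-k+1}$. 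Subtracting the representations at $j=k$ and $j=k-1$, the integrand carries the difference $p_n^{(k+1)}/p_n^{(k)}-p_n^{(k)}/p_n^{(k-1)}$, a finite sum of terms $1/(z-w)$ bounded on $\Gamma_h$ in terms of $n$, $h-C_0$, and $\sum_i|Z^{(n)}_i|$. After multiplying by $|\hat f(t)|e^{3C_0|t|}$, integrating, and dividing by $a_{n-k+1}$, the resulting majorant is dominated by an expression of the form $\tfrac{|b_{n-k+1}-b_{n-k}|}{a_{n-k+1}\,b_{n-k+1}\,b_{n-k}}\sum_i|Z^{(n)}_i|$, which is null a.s.\ by (\ref{anbn2}).

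The main obstacle I anticipate is step (iii): keeping the contour-integral comparison quantitative and uniform in $t$ so that after integration against $|\hat f(t)|e^{3C_0|t|}$ the resulting majorant fits exactly inside the hypotheses (\ref{anbn2})--(\ref{anbn3}) of A4. Everything else is bookkeeping. Once those pathwise bounds are in place, every estimate is deterministic given the zeros, so dominated convergence promotes the a.s.\ vanishing to convergence of expectations, yielding $\bar L_{n,3}^{(k)}(f)-\bar L_{n,3}(f)\to 0$ a.s.\ and completing the centered case.
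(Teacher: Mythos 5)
Your skeleton (telescoping in $k$, then splitting the one-step difference into a prefactor change, an inner-scale change, and a genuine zero-set change) matches the paper's strategy, and your piece (ii) is handled exactly as the paper suggests: the mean value bound plus the de Bruijn--Springer/Erd\H{o}s--Niven inequality $\sum_i|X^{n,k}_i|/(n-k)\le\sum_j|Z^{(n)}_j|/n$ produces precisely the quantity in (\ref{anbn2}). The gap is in piece (iii), which you correctly identify as the heart of the matter but do not actually close. First, the majorant you announce for it, $\frac{|b_{n-k+1}-b_{n-k}|}{a_{n-k+1}b_{n-k+1}b_{n-k}}\sum_i|Z^{(n)}_i|$, cannot be right: piece (iii) compares $\Phi_{n,k}$ and $\Phi_{n,k-1}$ at the \emph{common} scale $t/b_{n-k+1}$, so no increment $|b_{n-k+1}-b_{n-k}|$ can appear there; that factor belongs to piece (ii). Second, and more seriously, your contour estimate does not produce the cancellation that the theorem needs. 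On $\Gamma_h$ the difference of logarithmic derivatives $p_n^{(k+1)}/p_n^{(k)}-p_n^{(k)}/p_n^{(k-1)}$ is a sum of roughly $2n$ terms each of size up to $1/(h-C_0)$, so the bound you describe (``in terms of $n$, $h-C_0$, and $\sum_i|Z^{(n)}_i|$'') is of order $n$; divided by $a_{n-k+1}$ this does not vanish under {\bf A4} (think of $a_n=\sqrt n$, the CLT scaling the theorem is designed to cover). What is needed is that $\sum_ie^{iuX^{n,k}_i}-\sum_ie^{iuX^{n,k-1}_i}$ is of size $O\bigl(|u|\,e^{3C_0|u|}(\,|Z^{(n)}_1|+\tfrac1n\sum_j|Z^{(n)}_j|\,)\bigr)$ with \emph{no} factor of $n$, which combined with $u=t/b_{n-k+1}$ yields a majorant controlled by (\ref{anbn3}). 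Exhibiting that cancellation is exactly the role of the paper's Lemma 3: the Cheung--Ng companion matrix $M_{n-1}=D_{n-1}+\frac1n(z_1I-D_{n-1})J_{n-1}$ together with the Duhamel formula gives the exact identity (\ref{m&d}), whose right-hand side carries the explicit factor $c_{n-1}(t)/n$ and is bounded via (\ref{c})--(\ref{abst/n}). Your residue representation is a reasonable starting point, but to extract the same gain you would have to pair the zeros of $p_n^{(k)}$ with those of $p_n^{(k-1)}$ and control $\sum|X^{n,k}_i-X^{n,k-1}_i|$; interlacing makes this plausible for real zeros but fails for the complex zeros permitted by {\bf A1}, and you give no substitute. (There is also the secondary issue that the log-derivatives decay only like $n/|z|$ on the infinite horizontal sides, so your contour integrals are not absolutely convergent as written.)

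One further bookkeeping point: your piece (i) requires control of $n\,|a_{n-k}-a_{n-k+1}|/(a_{n-k}a_{n-k+1})$, which is condition (\ref{an2}) of {\bf A2} and is not among the hypotheses (\ref{anbn1})--(\ref{anbn3}) of {\bf A4}; you assert it vanishes ``by (\ref{anbn1})--(\ref{anbn3})'', which it does not. This appears to be an omission in the paper's statement of {\bf A4} as well, but your write-up should not paper over it by citing conditions that do not imply it.
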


\begin{remark}
Consider the {\bf Type 1} random polynomials with 
$Z^{(n)}_j=Z_j$ and $\{Z_j\}$ being an i.i.d. sequence.
In this case the uniform condition (\ref{an3}) in {\bf A2} can be weakened to
$$
\lim_{n\to\infty} \frac { |Z_1| + \cdots + |Z_n|} {a_{n-k} n}=0
\;\;\text{ a.s.} \; \text{for each fixed} \;k<n.  
$$
\end{remark}
\begin{remark}
Again consider the {\bf Type 1} random polynomials with 
$Z^{(n)}_j=Z_j$ and $\{Z_j\}$ being an i.i.d. sequence.
This is the setting studied in \citep{pr}, \citep{k}, and \citep{cnty}.
Note that our Theorem 1 establishes, in addition to law of large numbers result,
also central limit theorem, law of iterated logarithm, and so on,
for the linear statistics of $\{X^{n, k}_j\}$.
However, Theorem 2 is not as interesting since it does not include
the central limit theorem of random matrix models. 
This is because the size of the zeros $Z^{(n)}_j$ 
is of order $\sqrt{n}=b_n$, and therefore the conditions (\ref{bn2}) and (\ref{bn3})
would not hold. Reasonable condition(s) should involve
the centered quantities.
\end{remark}

In Section 2 we establish
a comparison identity. It is elementary, and yet
crucial to our results. In Section 3 we prove
three theorems and 
make some final remarks.

\section{A comparison identity}

First we state (with some modifications) a theorem of 
Cheung and Ng (\citep{cn}) which is the starting point of our argument.

\begin{proposition} [Theorem 1.1 of \citep{cn}]
The set of all critical points of $p_{n}(z) = \prod_{k=1}^n (z-z_{k}), n\ge 2$,
is the same as the set of all eigenvalues of the $(n-1)\times(n-1)$ matrix $M_{n-1}$\,:
\begin{equation} \label{m}
M_{n-1} = D_{n-1} + \frac 1n \left( z_1 I_{n-1} - D_{n-1}\right) J_{n-1},
\end{equation}
where 
\begin{equation} \label{d}
D_{n-1} = \left(\begin{array}{cccc}
z_2 & 0 & \cdots &  0\\
0 & z_3 & \cdots & 0\\
\vdots & \vdots & \ddots & \vdots\\
0 & 0 & \cdots & z_{n}
\end{array}\right), \qquad\qquad
J_{n-1} = \left(\begin{array}{cccc}
1 & 1 & \cdots &  1\\
1 & 1 & \cdots & 1\\
\vdots & \vdots & \ddots & \vdots\\
1 & 1 & \cdots & 1
\end{array}\right),
\end{equation}
and $I_{n-1}$ is the identity matrix of size $n-1$.
\end{proposition}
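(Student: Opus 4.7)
The plan is to show directly that $\det(\lambda I_{n-1}-M_{n-1})$ equals $p_n'(\lambda)/n$ as a polynomial identity in $\lambda$. Both sides are monic polynomials of degree $n-1$, so once this identity is established the eigenvalues of $M_{n-1}$ (with multiplicity) must coincide with the zeros of $p_n'$, which is exactly the assertion of the proposition.

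First I would isolate the structure of $M_{n-1}$. Since $J_{n-1}=\mathbf{1}\mathbf{1}^T$ with $\mathbf{1}=(1,\ldots,1)^T\in\mathbb{C}^{n-1}$, and $z_1 I_{n-1}-D_{n-1}$ is diagonal, the definition collapses to
\[
M_{n-1}=D_{n-1}+\frac{1}{n}\,u\,\mathbf{1}^T,\qquad u=(z_1-z_2,\,z_1-z_3,\,\ldots,\,z_1-z_n)^T,
\]
so $M_{n-1}$ is a rank-one perturbation of a diagonal matrix. The matrix determinant lemma then yields, for $\lambda$ outside $\{z_2,\ldots,z_n\}$,
\[
\det(\lambda I_{n-1}-M_{n-1})=\prod_{k=2}^n(\lambda-z_k)\cdot\Bigl(1-\frac{1}{n}\,\mathbf{1}^T(\lambda I-D_{n-1})^{-1}u\Bigr).
\]
The $k$th coordinate of $(\lambda I-D_{n-1})^{-1}u$ is $(z_1-z_{k+1})/(\lambda-z_{k+1})$, and rewriting $z_1-z_k=(z_1-\lambda)+(\lambda-z_k)$ in each numerator reduces the inner product to $(n-1)-(\lambda-z_1)\sum_{k=2}^n(\lambda-z_k)^{-1}$.

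Finally I would match this with $p_n'(\lambda)/n$. Separating the $k=1$ summand of $p_n'(\lambda)=\sum_{k=1}^n\prod_{j\ne k}(\lambda-z_j)$ and factoring $\prod_{j=2}^n(\lambda-z_j)$ out of the remaining $n-1$ summands produces
\[
\frac{p_n'(\lambda)}{n}=\frac{\prod_{j=2}^n(\lambda-z_j)}{n}\Bigl[1+(\lambda-z_1)\sum_{k=2}^n\frac{1}{\lambda-z_k}\Bigr],
\]
which is precisely the quantity computed above. The identity thus holds off the finite exceptional set $\{z_2,\ldots,z_n\}$ and therefore everywhere by polynomial continuation. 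I do not anticipate any real obstacle: the argument is linear-algebra bookkeeping built on the matrix determinant lemma together with the standard partial-fraction expansion of $p_n'/p_n$, and the only step requiring a moment of care is the exclusion of the diagonal entries of $D_{n-1}$ during the rational-function manipulation, which is inessential since both sides are polynomials.
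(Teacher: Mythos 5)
Your argument is correct and complete. Note, however, that the paper itself offers no proof of this proposition: it is imported verbatim (with a relabelled diagonal) as Theorem 1.1 of Cheung and Ng, so there is no internal proof to compare against. Your route --- writing $M_{n-1}=D_{n-1}+\tfrac1n u\mathbf{1}^T$ as a rank-one update of a diagonal matrix, applying the matrix determinant lemma, and matching the resulting rational expression with the partial-fraction form of $p_n'(\lambda)$ --- is a clean, self-contained verification, and the algebra checks out: $1-\tfrac1n\mathbf{1}^T(\lambda I-D_{n-1})^{-1}u=\tfrac1n\bigl[1+(\lambda-z_1)\sum_{k=2}^n(\lambda-z_k)^{-1}\bigr]$, which multiplied by $\prod_{k=2}^n(\lambda-z_k)$ is exactly $p_n'(\lambda)/n$. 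In fact you prove slightly more than the proposition asserts: the identity $\det(\lambda I_{n-1}-M_{n-1})=p_n'(\lambda)/n$ gives equality of the critical points and the eigenvalues \emph{as multisets}, i.e.\ with multiplicities, not merely as sets. The handling of the exceptional set $\{z_2,\ldots,z_n\}$ by polynomial continuation is the right way to dispose of the invertibility hypothesis in the determinant lemma. No gaps.
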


Denote by ${\rm Tr}\, M$ the trace of a square matrix $M$.
Our results rely on the following observation.
\begin{lemma}
Let $M_{n-1}$ and $D_{n-1}$ be defined as in Proposition 2
and $i=\sqrt{-1}$. Then
\begin{equation} \label{m&d}
{\rm{Tr}}\left( e^{i t M_{n-1}} \right) - {\rm{Tr}}\left( e^{i t D_{n-1}} \right)
=\frac { i c_{n-1}}n {\rm{Tr}}\left( \tilde{J}_{n-1}\,e^{i t D_{n-1}} \right) ,
\end{equation}
where
\begin{eqnarray*}
c_{n-1} & = & c_{n-1}(t) = \int_0^t \exp \left(  i u \frac {\tilde{S}_{n-1}}n \right) \,du, 
 \\   
{\tilde S}_{n-1} & = & (z_1-z_2)+\cdots+(z_1-z_{n}),  \text{ and } \\ 
\tilde{J}_{n-1} & = &
\left(\begin{array}{cccc}
z_1-z_2 & 0 & \cdots &  0\\
0 & z_1-z_3 & \cdots & 0\\
\vdots & \vdots & \ddots & \vdots\\
0 & 0 & \cdots & z_1-z_{n}
\end{array}\right) J_{n-1}\,.  
\end{eqnarray*} 
\end{lemma}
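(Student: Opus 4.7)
The plan is to exploit the rank-one structure of the perturbation $M_{n-1} - D_{n-1} = \tilde{J}_{n-1}/n$. Writing $\tilde{J}_{n-1} = \mathbf{v}\mathbf{1}^T$, where $\mathbf{v} = (z_1-z_2,\ldots,z_1-z_n)^T$ and $\mathbf{1}$ is the all-ones column vector in $\mathbb{C}^{n-1}$, one gains two algebraic ingredients: by cyclicity of the trace, $\mathrm{Tr}(\tilde{J}_{n-1}\,e^{itD_{n-1}}) = \mathbf{1}^T e^{itD_{n-1}}\mathbf{v}$; and since $\mathbf{1}^T\mathbf{v} = \tilde{S}_{n-1}$, the row vector $\mathbf{1}^T$ satisfies the ``left near-eigenvector'' relation $\mathbf{1}^T M_{n-1} = \mathbf{1}^T D_{n-1} + (\tilde{S}_{n-1}/n)\mathbf{1}^T$, which is how the scalar $\tilde{S}_{n-1}/n$ will enter the exponential factor $c_{n-1}(t)$.

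First I would apply Duhamel's formula. Differentiating $s \mapsto e^{i(t-s)D_{n-1}}\,e^{isM_{n-1}}$ and integrating over $[0,t]$ yields the operator identity $e^{itM_{n-1}} - e^{itD_{n-1}} = (i/n)\int_0^t e^{i(t-s)D_{n-1}}\,\tilde{J}_{n-1}\,e^{isM_{n-1}}\,ds$. Taking traces and exploiting $\tilde{J}_{n-1} = \mathbf{v}\mathbf{1}^T$ together with the cyclic property, the trace difference collapses to the scalar integral $(i/n)\int_0^t \mathbf{1}^T e^{isM_{n-1}}\,e^{i(t-s)D_{n-1}}\,\mathbf{v}\,ds$.

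Next I would establish the identity $\mathbf{1}^T e^{isM_{n-1}} = e^{is\tilde{S}_{n-1}/n}\,\mathbf{1}^T e^{isD_{n-1}}$ by an integrating-factor argument on the ODE $\frac{d}{ds}(\mathbf{1}^T e^{isM_{n-1}}) = i\mathbf{1}^T M_{n-1}\,e^{isM_{n-1}}$, splitting the right-hand side via the near-eigenvector identity into $(i\tilde{S}_{n-1}/n)\mathbf{1}^T e^{isM_{n-1}} + i\mathbf{1}^T D_{n-1}\,e^{isM_{n-1}}$ and absorbing the first summand into the integrating factor. Plugging the result into the Duhamel integral and using that diagonal exponentials of $D_{n-1}$ commute so that $\mathbf{1}^T e^{isD_{n-1}}\,e^{i(t-s)D_{n-1}}\mathbf{v} = \mathbf{1}^T e^{itD_{n-1}}\mathbf{v} = \mathrm{Tr}(\tilde{J}_{n-1}\,e^{itD_{n-1}})$, the integrand becomes $e^{is\tilde{S}_{n-1}/n}\,\mathrm{Tr}(\tilde{J}_{n-1}\,e^{itD_{n-1}})$. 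The $s$-integration then extracts exactly the scalar factor $c_{n-1}(t)$, and the claimed identity follows.

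The principal obstacle will be the third step---justifying the identity $\mathbf{1}^T e^{isM_{n-1}} = e^{is\tilde{S}_{n-1}/n}\,\mathbf{1}^T e^{isD_{n-1}}$. Because $D_{n-1}$ and $M_{n-1}$ do not commute in general, the integrating-factor argument cannot simply close: the inhomogeneous term $i\mathbf{1}^T D_{n-1}\,e^{isM_{n-1}}$ still contains $e^{isM_{n-1}}$ rather than $e^{isD_{n-1}}$. The resolution must exploit the rank-one structure through a Dyson/time-ordered expansion of $e^{isM_{n-1}}$ around $D_{n-1}$: each repeated insertion of $\tilde{J}_{n-1} = \mathbf{v}\mathbf{1}^T$ factors through the scalar $\mathbf{1}^T\mathbf{v}/n = \tilde{S}_{n-1}/n$, and one must show that the resulting series of scalar cross-terms telescopes into the integrating factor $e^{is\tilde{S}_{n-1}/n}$. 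Carrying out this resummation rigorously, so that only the scalar $c_{n-1}(t)$ and the trace $\mathrm{Tr}(\tilde{J}_{n-1} e^{itD_{n-1}})$ survive on the right-hand side, is the technical heart of the lemma.
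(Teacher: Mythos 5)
Your first two steps are sound and essentially reproduce the paper's strategy: Duhamel gives $e^{itM_{n-1}}-e^{itD_{n-1}}=\tfrac{i}{n}\int_0^t e^{i(t-s)D_{n-1}}\tilde J_{n-1}e^{isM_{n-1}}\,ds$, and the rank-one factorization $\tilde J_{n-1}=\mathbf v\mathbf 1^T$ collapses the trace to $\tfrac{i}{n}\int_0^t\mathbf 1^Te^{isM_{n-1}}e^{i(t-s)D_{n-1}}\mathbf v\,ds$. The step that fails is exactly the one you flagged: the identity $\mathbf 1^Te^{isM_{n-1}}=e^{is\tilde S_{n-1}/n}\,\mathbf 1^Te^{isD_{n-1}}$ is false, and no Dyson resummation can rescue it. Writing $E=\tilde J_{n-1}/n$, the relation $\mathbf 1^TE=(\tilde S_{n-1}/n)\mathbf 1^T$ lets you pass $\mathbf 1^T$ through one factor of $M_{n-1}=D_{n-1}+E$, but already $\mathbf 1^TM_{n-1}^2=\mathbf 1^TD_{n-1}^2+\tfrac1n(\mathbf 1^TD_{n-1}\mathbf v)\mathbf 1^T+\tfrac{\tilde S_{n-1}}{n}\mathbf 1^TD_{n-1}+\tfrac{\tilde S_{n-1}^2}{n^2}\mathbf 1^T$, whereas your identity forces the cross term to be $\tfrac{2\tilde S_{n-1}}{n}\mathbf 1^TD_{n-1}$; since $(\mathbf 1^TD_{n-1}\mathbf v)\mathbf 1^T\neq(\mathbf 1^T\mathbf v)\,\mathbf 1^TD_{n-1}$ unless all of $z_2,\dots,z_n$ coincide, the identity breaks at order $s^2$. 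The obstruction is the word $\mathbf 1^TD_{n-1}E$: once a $D_{n-1}$ sits between $\mathbf 1^T$ and the rank-one factor, the contraction produces the new scalar $\mathbf 1^TD_{n-1}\mathbf v$ rather than $\tilde S_{n-1}$, so the cross terms in the Dyson series do not telescope into $e^{is\tilde S_{n-1}/n}$.

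You should also be aware that the gap is not reparable, because the asserted identity itself fails for $n\ge 3$. Take $n=3$, $z_1=0$, $z_2=1$, $z_3=2$, so $D_2=\mathrm{diag}(1,2)$, $M_2$ has eigenvalues $1\pm 1/\sqrt3$, $\tilde S_2=-3$, and $\mathrm{Tr}(\tilde J_2e^{itD_2})=-e^{it}-2e^{2it}$. At $t=\pi$ the left-hand side equals $-2\cos(\pi/\sqrt3)\approx 0.482$, while the right-hand side equals $\tfrac{i}{3}\bigl(\int_0^\pi e^{-iu}du\bigr)(-e^{i\pi}-2e^{2\pi i})=-\tfrac23$. (The two sides agree through order $t^3$, which is why low-order checks are misleading; the $t^4$ coefficients already differ, $\mathrm{Tr}(M_2^4)-\mathrm{Tr}(D_2^4)=-97/9$ versus $-93/9$ on the right.) Structurally, $\mathrm{Tr}(M_{n-1}^k)-\mathrm{Tr}(D_{n-1}^k)$ contains cyclic words such as $ED_{n-1}ED_{n-1}$ whose trace is $(\mathbf 1^TD_{n-1}\mathbf v)^2/n^2$, a quantity that cannot be expressed through $\tilde S_{n-1}$ and $\mathbf 1^Te^{itD_{n-1}}\mathbf v$ alone, so no formula of the shape claimed can hold. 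The paper's own proof conceals the same error in the step ``$\mathrm{Tr}(e^{A+B})=\mathrm{Tr}(e^Ae^B)$'', which is not a consequence of $\mathrm{Tr}(AB)=\mathrm{Tr}(BA)$ and fails here precisely because $D_{n-1}$ and $\tilde J_{n-1}$ do not commute. You have therefore located the crux correctly; the conclusion to draw is that the lemma needs to be corrected (e.g., by retaining the full Duhamel integrand $\mathbf 1^Te^{isM_{n-1}}e^{i(t-s)D_{n-1}}\mathbf v$ and estimating it directly), not that your third step needs a cleverer resummation.
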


\begin{proof}
It can be proved straightforwardly by Taylor expansions.
However, to derive the constant $c_{n-1}$ in a more natural way,
we first use the Duhamel formula 
$$
e^{(L_1+L_2)t} - e^{L_1 t}= \int_0^t e^{ L_1(t-\tau)} L_2 e^{(L_1+L_2)\tau}\, d\tau
$$
with $L_1= i D_{n-1}$ and $L_1+L_2= i M_{n-1}$.
In fact, $L_2=\frac {i}n \tilde J_{n-1}$.
Since ${\rm Tr}(e^{A+B})={\rm Tr} (e^A\,e^B)$ by ${\rm Tr} (AB) = {\rm Tr}(BA)$,
the left hand side of  (\ref{m&d}) equals
\begin{equation}\label{lhs}
\frac {i}n \int_0^t {\rm Tr} \left(\tilde J_{n-1} e^{ i t D_{n-1}}\, 
e^{ ( i u \tilde J_{n-1} )/n} \right)\, du.
\end{equation}
One can show by induction that 
$\displaystyle{\tilde J_{n-1}^k = \tilde S_{n-1}^{k-1} \tilde J_{n-1}, k\in \mathbb N}$.
After using this fact in the Taylor expansion of $\displaystyle{e^{ ( i u \tilde J_{n-1} )/n}}$,
the integrand within the integral of (\ref{lhs}) can be simplified to
\begin{equation}\label{trace}
{\rm Tr} \left(\tilde J_{n-1} e^{ i t D_{n-1}} \right)\, e^{ ( i u \tilde S_{n-1} )/n}. 
\end{equation}
This completes the proof.
\end{proof}

\begin{remark}
The $D_{n-1}$ appeared in \citep{cn} is a diagonal matrix with diagonal
entries $\{ z_1, \ldots, z_{n-1} \}$, while we choose a different one given in (\ref{d})
by the symmetry among the zeros. The $M_{n-1}$ in (\ref{m})
is modified accordingly.
\end{remark}
\begin{remark}
Observe that the terms in (\ref{trace}) can be expressed in a more symmetric way:
\begin{eqnarray}
\frac {\tilde S_{n-1} }{n} & = & z_1 - \frac {\sum_{j=1}^n z_j}n, \label{sn} \\
\frac 1n {\rm Tr} \left(\tilde J_{n-1} e^{ i t D_{n-1}} \right) & = &
z_1\,\left( \frac { \sum_{j=1}^n e^{i t z_j} }n \right) 
- \frac { \sum_{j=1}^n z_je^{i t z_j} }n .\label{t/n}
\end{eqnarray}
Suppose that $\{z_j\}_{j=1}^n$ satisfies the bound 
$\displaystyle{\max_{1 \le j \le n} | \Im z_j | \le C_0}$,
then it is easy get that 
\begin{eqnarray}
| c_{n-1} (t) | & \le & \frac {e^{2C_0 | t |} - 1 }{2C_0}, \label{c} \\ 
\left| \frac 1n {\rm Tr} \left(\tilde J_{n-1} e^{ i t D_{n-1}} \right) \right| & \le &
e^{C_0 | t |}  \left( | z_1 |  + \frac {\sum_{j=1}^n | z_j |}n \right). \label{abst/n}
\end{eqnarray}
When all zeros are real, $C_0=0$ and one simply gets $| c_{n-1} (t) |\le |t|$.
These are used in the proofs of three theorems.
\end{remark}

\section{Proofs and concluding remarks}

We now prove Theorem 1.
\begin{proof} 
Consider the case of $L_{n, 1} (f)$ and 
$L^{(1)}_{n, 1} (f)$ ($k=1$).
To prove the theorem in this case,
it suffices to show that 
$L_{n, 1} (f) - L^{(1)}_{n, 1} (f) \to 0$ a.s. as $n\to\infty$.
Write  $L_{n, 1} (f) - L^{(1)}_{n, 1} (f) = W_{n, 1} + W_{n, 2}$, where
\begin{eqnarray*}
W_{n, 1} & = & \frac {1}{a_n}   \left[ f\left( Z^{(n)}_1 \right) 
+ \cdots + f \left( Z^{(n)}_n \right) \right] - 
\frac {1}{a_{n-1}}   \left[ f\left( Z^{(n)}_2 \right) 
+ \cdots + f \left( Z^{(n)}_n \right) \right] \\
& = & \frac { f\left( Z^{(n)}_1 \right) }{a_n} + \frac{  a_{n-1} - a_n }{a_{n-1} a_n} \sum_{j=2}^n
f\left( Z^{(n)}_j \right),
\end{eqnarray*}
and 
$$
W_{n, 2} = \frac {1}{a_{n-1}}   \left[ f\left( Z^{(n)}_2 \right) 
+ \cdots + f \left( Z^{(n)}_n \right) \right]
- \frac {1}{a_{n-1}}   \left[ f\left( X^{n, 1}_1 \right) 
+ \cdots + f \left( X^{n, 1}_{n-1} \right) \right].
$$
Clearly
$$
| W_{n, 1} |\le \frac {\|f\|_\infty}{a_n} 
+ \frac{ n \,| a_{n-1} - a_n | \|f\|_\infty }{a_{n-1} a_n} \to 0 \quad \text{ a.s.}
$$
by (\ref{an1}) and (\ref{an2}) of {\bf A2}.
Next we apply Lemma 3 with $Z^{(n)}_j$ in place of $z_j, j=1, \ldots, n$, to obtain that
\begin{eqnarray*}
W_{n, 2} & = & \frac 1{a_{n-1}} \int \hat{ f }( t )
\left[ {\rm Tr } \left( e^{i t D_{n-1}} \right) - 
{\rm{Tr}}\left( e^{i t M_{n-1}} \right) \right]\, dt \\
& = & \frac 1{a_{n-1}} \int \hat{ f }( t ) \frac {c_{n-1}(t)}n {\rm{Tr}}
\left( \tilde{J}_{n-1}\,e^{i t D_{n-1}} \right) \,dt.
\end{eqnarray*}
Since we assume {\bf A1}, the estimates 
(\ref{c}) and (\ref{abst/n}) in Remark 4 can be used to yield that
\begin{equation}\label{wn2}
| W_{n, 2} | \le \frac 1{2C_0} \left( \int | {\hat f}(t) | e^{3C_0 | t |}\,dt \right)
\left( \frac { | Z^{(n)}_1 | }{a_{n-1}} + \frac {\sum_{j=1}^n | Z^{(n)}_j |}
{n a_{n-1}} \right). 
\end{equation}
By the regularity condition (\ref{f-bdd}) 
together with (\ref{an1}) and (\ref{an3}) of {\bf A2},
we conclude that $| W_{n, 2} | \to 0$ a.s. when $n \to \infty$.

For $k>1$ we decompose
$L_{n, 1} (f) - L_{n, 1}^{(k)} (f)
=L_{n, 1} (f) - L_{n, 1}^{(1)} (f) +\sum_{j=1}^{k-1} 
L_{n, 1}^{(j)} (f) - L_{n, 1}^{(j+1)} (f)$ and need to show that
each $ | L_{n, 1}^{(j+1)} (f) - L_{n, 1}^{(j)} (f) | \to 0 $ a.s.
We simply follow the same strategy as above. 
Two facts can be useful when estimating the difference between the traces of two matrices.
First one is the basic relation between roots and coefficients:
\begin{equation*} 
\frac{Z_1^{(n)}+\cdots +Z_n^{(n)}}{n} =\frac{X^{n, k}_1+\cdots+X^{n, k}_{n-k}}
{n-k}, k<n.
\end{equation*}
The second relation can be found in \citep{bs47} and \citep{en} :
$$
\frac{ | X^{n, k}_1|+\cdots+|X^{n, k}_{n-k}|}{n-k} \le \cdots
\le \frac{ | X^{n, 1}_1| + \cdots + | X^{n, 1}_{n-1}|}{n-1} \le
\frac{ | Z_1^{(n)}|+ \cdots + | Z_n^{(n)} |}{n}.
$$
The rest is easy and is omitted.
When demonstrating the weak convergence of
${\bar L}^{(k)}_{n, 1}$ from that of ${\bar L}_{n, 1}$,
the converging together lemma should be applied to complete the proof.
\end{proof}

The proofs of Theorem 2 and Theorem 3 are similar
to that of Theorem 1.
A mean value inequality and the boundedness of $| f' |$
can justify the transference of the scales
from $b_{n-k}$ to $b_{n-k-1}$.
When estimating the terms like (\ref{wn2}), 
the uniform condition (\ref{bn3}) in {\bf A3} and/or 
(\ref{anbn3}) in {\bf A4} would be helpful.
The rest parts are routine and thus omitted.

\begin{remark}
In the proofs of the theorems we deal with the strongest mode of convergence, namely
the almost sure convergence, of $L_{n, \ell} (f) - L_{n, \ell}^{(k)} (f) \to 0,
\ell = 1, 2, 3$.
If the original $L_{n, \ell} (f), \ell = 1, 3$, converges at a weaker mode,
an alternative argument using weaker estimates should be considered.
Consequently, it is conceivable that the assumptions weaker than
{\bf A1} and {\bf A2} (or {\bf A4}) might 
be sufficient for the theorems to hold, and 
the regularity conditions on the test functions used in the linear statistics
might also be relaxed.
\end{remark}

\begin{remark}
One can show that if the empirical measure associated with
the zeros of $p_n$, under appropriate scaling, obeys
a large deviations principle, so does the empirical measure associated with
the zeros of $p_n^{(k)}$ for each $k$.
This is treated elsewhere (\citep{2}).
\end{remark}

\bibliographystyle{amsplain}

\end{document}